\providecommand{\abs}[1]{\Big\lvert#1\Big\rvert}
\theoremstyle{plain}
\newtheorem{theorem}{Theorem}[section]
\newtheorem{corollary}[theorem]{Corollary}
\newtheorem{lemma}[theorem]{Lemma}
\newtheorem{proposition}[theorem]{Proposition}
\theoremstyle{remark}
\newtheorem{remark}[theorem]{Remark}
\theoremstyle{definition}
\newtheorem{definition}[theorem]{Definition}
\title[Oresme Polynomials and Their Derivatives]{Oresme Polynomials and Their Derivatives}
\author[G. Cerda-Morales]{\scriptsize Gamaliel Cerda-Morales$^{1}$ }
\date{}
\begin{document}
\maketitle

\vspace{-20pt}
\begin{center}
{\footnotesize $^1$Departamento de Matem\'atica, Universidad de Concepci\'on, Concepci\'on, Chile.\\
E-mail: gamaliel.cerda@usm.cl }\end{center}

\hrule

\begin{abstract}
We study the problem of generalization of Oresme numbers with a new sequence of numbers called Oresme polynomials. Moreover, by using the matrix methods for Oresme polynomials, we obtain the identities including the general bilinear index-reduction formula of these numbers. Further, Oresme polynomials that are natural extensions of the $k$-Oresme numbers are introduced and some relations for the derivatives of these polynomials in the form of convolution are proved.
\end{abstract}

\medskip
\noindent
\subjclass{\footnotesize {\bf Mathematical subject classification:} 
Primary: 11B39; Secondary: 11B37.}

\medskip
\noindent
\keywords{\footnotesize {\bf Key words:} Fibonacci number, generalized Fibonacci number, $k$-Oresme number, matrix method, Oresme number, Oresme polynomial.}
\medskip

\hrule

\section{Introduction}\label{sec:0}
In \cite{3}, A.F. Horadam presented a history of number attributed to Nicole Oresme, namely the sequence $$\{O_{n}\}_{n\geq 1}=\left\lbrace \frac{n}{2^{n}}\right\rbrace=\left\lbrace \frac{1}{2},\frac{2}{4},\frac{3}{8},\cdots, \frac{n}{2^{n}},\cdots \right\rbrace.$$ 

The Oresme numbers have many interesting properties and applications in many fields of science (see, e.g., \cite{0,1,2,3}). The Oresme numbers $O_{n}$ are defined by the recurrence relation
\begin{equation}\label{e1}
O_{0}=0,\ O_{1}=\frac{1}{2},\ \ \ O_{n+2}=O_{n+1}-\frac{1}{4}O_{n},\ n\geq0.
\end{equation}

In \cite{2} the basic list of identities provided by A.F. Horadam \cite{3} is expanded and extended to several identities for some of the $k$-Oresme numbers. In this sense, the $k$-Oresme numbers, $\{A_{n}^{(k)}\}_{n\geq0}$, are defined by
\begin{equation}\label{e2}
A_{0}^{(k)}=0,\ A_{1}^{(k)}=\frac{1}{k},\ \ \ A_{n+2}^{(k)}=A_{n+1}^{(k)}-\frac{1}{k^{2}}A_{n}^{(k)},\ n\geq0.
\end{equation}

The following properties given for $k$-Oresme numbers play important roles in this paper (see \cite{2,3}). For example,
\begin{equation}\label{e4}
A_{n+2}^{(k)}=\left(\frac{k^{2}-1}{k^{2}}\right)A_{n+1}^{(k)}-\frac{1}{k^{4}}A_{n-1}^{(k)},\ n\geq1,
\end{equation}
\begin{equation}\label{e5}
\sum_{j=0}^{n}A_{j}^{(k)}=k^{2}\left(\frac{1}{k}-A_{n+2}^{(k)}\right),
\end{equation}
\begin{equation}\label{eo5}
\sum_{j=0}^{n}(-1)^{j}A_{j}^{(k)}=\frac{k^{2}}{2k^{2}+1}\left(-\frac{1}{k}+(-1)^{n+1}\left(A_{n+2}^{(k)}-2A_{n+1}^{(k)}\right)\right),
\end{equation}
\begin{equation}\label{ec5}
\sum_{j=0}^{n}A_{2j+1}^{(k)}=\frac{k^{2}}{2k^{2}+1}\left(\frac{k^{2}+1}{k}+\frac{k^{2}+1}{k^{2}}\left(A_{2n+1}^{(k)}-k^{2}A_{2(n+1)}^{(k)}\right)\right)
\end{equation}
and
\begin{equation}\label{e6}
A_{n+1}^{(k)}A_{n-1}^{(k)}-\left(A_{n}^{(k)}\right)^{2}=-\left(\frac{1}{k^{2}}\right)^{n}, n\geq 1.
\end{equation}

Using standard techniques for solving recurrence relations, the auxiliary equation, and its roots are given by 
$$x^{2}-x+\frac{1}{k^{2}}=0;\ \textrm{and}\ x=\frac{k\pm \sqrt{k^{2}-4}}{2k}.$$ Thus the Binet formula can be written as
\begin{equation}\label{binet1}
A_{n}^{(k)}=\frac{1}{\sqrt{k^{2}-4}}\left[\left(\frac{k+ \sqrt{k^{2}-4}}{2k}\right)^{n}-\left(\frac{k- \sqrt{k^{2}-4}}{2k}\right)^{n}\right]
\end{equation}
if $k^{2}-4>0$. Note that $A_{n}^{(2)}$ is the $n$-th Oresme number.

In this paper, the sequence of $k$-Oresme numbers $\{A_{n}^{(k)}\}$ is extended to the sequence of rational functions, that we will call Oresme polynomials $\{O_{n}(x)\}$ by replacing $k$ with a real variable $x$. It is also observed that numerous properties of Oresme polynomials admit straightforward proofs.

\section{Oresme Polynomials}\label{sec:1}

\begin{definition}
Let $x$ be a nonzero real variable. The sequence of Oresme polynomials is recursively defined as follows:
\begin{equation}\label{d1}
O_{n+1}(x)=\left\{ \begin{array}{lcc}
             \frac{1}{x} &   \mathrm{if}  & n=0,1,\\
             \\ O_{n}(x)-\frac{1}{x^{2}}O_{n-1}(x) &  \mathrm{if} & n \geq 2.
             \end{array}
   \right.
\end{equation}
\end{definition}
Some initial Oresme polynomials are
\begin{align*}
O_{0}(x)&=0,\\
O_{1}(x)&=\frac{1}{x},\\
O_{2}(x)&=\frac{1}{x},\\
O_{3}(x)&=\frac{1}{x^{3}}(x^{2}-1),\\
O_{4}(x)&=\frac{1}{x^{3}}(x^{2}-2),\\
O_{5}(x)&=\frac{1}{x^{5}}(x^{4}-3x^{2}+1),\\
O_{6}(x)&=\frac{1}{x^{5}}(x^{4}-4x^{2}+3).
\end{align*}
It is easy to see in Eq. (\ref{d1}) that, $O_{n}(3)=\frac{F_{2n}}{3^{n}}$ for each $n$, where $F_{n}$ is the $n$-th Fibonacci number. 

Solving the auxiliary Eq. (\ref{d1}), we get the roots $\lambda_{1}(x)=(x+\sqrt{x^{2}-4})/2x$ and its conjugate $\lambda_{2}(x)=(x-\sqrt{x^{2}-4})/2x$, where $x^{2}-4>0$. Note that $\lambda_{1}(3)=(3+\sqrt{5})/6$ and $\lambda_{2}(3)=(3-\sqrt{5})/6$ are the constant for the numbers $\frac{F_{2n}}{3^{n}}$.

In order to deduce the Binet formula for Oresme polynomials, we use the method of induction to get
\begin{equation}\label{b1}
O_{n}(x)=\frac{1}{\sqrt{x^{2}-4}}\left(\lambda_{1}^{n}(x)-\lambda_{2}^{n}(x)\right),
\end{equation}
where $\lambda_{1}(x)=(x+\sqrt{x^{2}-4})/2x$ and $\lambda_{2}(x)=(x-\sqrt{x^{2}-4})/2x$ with $x^{2}-4>0$. 

Returning to the sequence $O_{n}(3)$, we note that $x=3$ yielded a Fibonacci number pattern. Thus it is seen from (\ref{b1}) that 
\begin{equation}\label{exam1}
O_{n}(3)=\frac{1}{3^{n}\sqrt{5}}\left[\left(\frac{3+\sqrt{5}}{2}\right)^{n}-\left(\frac{3-\sqrt{5}}{2}\right)^{n}\right].
\end{equation}

The following result demonstrates the limit of the quotient of two consecutive terms of Oresme polynomials:

\begin{lemma}\label{l1}
For $x>2$,
\begin{equation}\label{lab1}
\lim_{n\rightarrow \infty}\frac{O_{n+1}(x)}{O_{n}(x)}=\lambda_{1}(x).
\end{equation}
\end{lemma}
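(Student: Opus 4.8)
The plan is to use the Binet formula (\ref{b1}) directly, since it expresses $O_n(x)$ in closed form in terms of the two roots $\lambda_1(x)$ and $\lambda_2(x)$. The key observation is that for $x > 2$ we have $x^2 - 4 > 0$, so both roots are real, and moreover $\lambda_1(x) > \lambda_2(x) > 0$. I first want to establish the strict inequality $|\lambda_2(x)| < |\lambda_1(x)|$, which is what makes the dominant root $\lambda_1(x)$ control the asymptotic behavior of the ratio.

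First I would write out the quotient explicitly using (\ref{b1}). Since the normalizing factor $1/\sqrt{x^2-4}$ cancels, we get
\begin{equation*}
\frac{O_{n+1}(x)}{O_n(x)} = \frac{\lambda_1^{n+1}(x) - \lambda_2^{n+1}(x)}{\lambda_1^n(x) - \lambda_2^n(x)}.
\end{equation*}
Next I would factor out $\lambda_1^n(x)$ from both numerator and denominator. Setting $r(x) = \lambda_2(x)/\lambda_1(x)$, this rewrites the quotient as
\begin{equation*}
\frac{O_{n+1}(x)}{O_n(x)} = \lambda_1(x)\cdot\frac{1 - r(x)\,r^n(x)}{1 - r^n(x)}.
\end{equation*}

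The main step is then to show that $|r(x)| < 1$ for $x > 2$, so that $r^n(x) \to 0$ as $n \to \infty$. From the formulas for the roots, $\lambda_1(x)\lambda_2(x) = 1/x^2$ and $\lambda_1(x) + \lambda_2(x) = 1$, and since both roots are positive with $\lambda_1(x) > \lambda_2(x)$, one checks that $\lambda_1(x) > 1/2 > \lambda_2(x) > 0$, whence $0 < r(x) = \lambda_2(x)/\lambda_1(x) < 1$. I expect this verification of the root inequality to be the only point requiring genuine care, but it is routine given $x > 2$.

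Finally, with $r^n(x) \to 0$ established, both the numerator factor $1 - r(x)\,r^n(x)$ and the denominator factor $1 - r^n(x)$ tend to $1$, so their ratio tends to $1$ and the whole expression tends to $\lambda_1(x)$, giving (\ref{lab1}). I would also note in passing that the denominator $1 - r^n(x)$ is nonzero for all $n \geq 1$ since $0 < r(x) < 1$, so the quotient is well defined throughout.
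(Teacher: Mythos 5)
Your proposal is correct and follows essentially the same route as the paper: both apply the Binet formula, divide numerator and denominator by $\lambda_{1}^{n}(x)$, and use $0<\lambda_{2}(x)/\lambda_{1}(x)<1$ for $x>2$ to kill the subdominant term. Your extra verification that $\lambda_{1}(x)>\tfrac12>\lambda_{2}(x)>0$ via $\lambda_{1}(x)+\lambda_{2}(x)=1$ and $\lambda_{1}(x)\lambda_{2}(x)=1/x^{2}$ is a slightly more explicit justification of the ratio bound than the paper gives, but it is the same argument.
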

\begin{proof}
By using the Binet formula (\ref{b1}), we find
\begin{align*}
\lim_{n\rightarrow \infty}\frac{O_{n+1}(x)}{O_{n}(x)}&=\lim_{n\rightarrow \infty}\frac{\lambda_{1}^{n+1}(x)-\lambda_{2}^{n+1}(x)}{\lambda_{1}^{n}(x)-\lambda_{2}^{n}(x)} \cdot \frac{\frac{1}{\lambda_{1}^{n}(x)}}{\frac{1}{\lambda_{1}^{n}(x)}}\\
&=\lim_{n\rightarrow \infty}\frac{\lambda_{1}(x)-\left(\frac{\lambda_{2}(x)}{\lambda_{1}(x)}\right)^{n}\lambda_{2}(x)}{1-\left(\frac{\lambda_{2}(x)}{\lambda_{1}(x)}\right)^{n}}.
\end{align*}
Since $\lambda_{2}(x)<\lambda_{1}(x)$ for every $x>2$, we get $$\lim_{n\rightarrow \infty} \left(\frac{\lambda_{2}(x)}{\lambda_{1}(x)}\right)^{n}=0$$ and the desired result is obtained.
\end{proof}

It can be observed that, for $x=2$ or $x=-2$, $$\lim_{n\rightarrow \infty} \abs{\frac{O_{n+1}(x)}{O_{n}(x)}}=1.$$ However, this limit does not exist for $x$ lying between $-2$ and $2$.

The matrix corresponding to Oresme polynomials is represented as a second-order matrix $M_{or}$ whose entries are the first three modified Oresme polynomials, i.e., $$M_{or}(x)=\left[\begin{array}{cc} 1&-\frac{1}{x^{2}}\\ 1& 0 \end{array}\right].$$

By induction, we can easily verify that the matrix $M_{or}(x)$ raised to the $n$-th power is given by the formula $$M_{or}^{n}(x)=\left[\begin{array}{cc} xO_{n+1}(x)&-\frac{1}{x}O_{n}(x)\\ xO_{n}(x)& -\frac{1}{x}O_{n-1}(x) \end{array}\right],$$ for any integer $n\geq 1$. 

Since $\det(M_{or}(x))=\frac{1}{x^{2}}$ and $\det(M_{or}^{n}(x))=\left(\det(M_{or}(x))\right)^{n}=\frac{1}{x^{2n}}$, we conclude that $$O_{n+1}(x)O_{n-1}(x)-O_{n}^{2}(x)=-\frac{1}{x^{2n}}.$$

Further, since $M_{or}^{n}(x)M_{or}^{m}(x)=M_{or}^{n+m}(x)$ for all $n,m\geq 1$, the left expression is \begin{align*}
&M_{or}^{n}(x)M_{or}^{m}(x)\\
&=\left[\begin{array}{cc} x^{2}O_{n+1}(x)O_{m+1}(x)-O_{n}(x)O_{m}(x)&\frac{1}{x^{2}}O_{n}(x)O_{m-1}(x)-O_{n+1}(x)O_{m}(x)\\ x^{2}O_{n}(x)O_{m+1}(x)-O_{n-1}(x)O_{m}(x)&\frac{1}{x^{2}}O_{n-1}(x)O_{m-1}(x)-O_{n}(x)O_{m}(x) \end{array}\right],
\end{align*}
while the expression on the right-hand side reduces to
$$M_{or}^{n+m}(x)=\left[\begin{array}{cc} xO_{n+m+1}(x)&-\frac{1}{x}O_{n+m}(x)\\ xO_{n+m}(x)& -\frac{1}{x}O_{n+m-1}(x) \end{array}\right].$$ Equating the corresponding entries for both matrices, we have $$O_{n+m}(x)=xO_{n}(x)O_{m+1}(x)-\frac{1}{x}O_{n-1}(x)O_{m}(x).$$

\begin{proposition}[General bilinear index-reduction formula]
For integers $a$, $b$, $c$, $d$ and $t$ with $a+b=c+d$, we have
\begin{equation}\label{gb1}
O_{a}(x)O_{b}(x)-O_{c}(x)O_{d}(x)=\frac{1}{x^{2t}}\left(O_{a-t}(x)O_{b-t}(x)-O_{c-t}(x)O_{d-t}(x)\right).
\end{equation}
\end{proposition}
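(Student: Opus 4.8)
The plan is to push everything through the Binet formula (\ref{b1}) and exploit the single algebraic fact that the two characteristic roots multiply to $\lambda_1(x)\lambda_2(x)=1/x^2$, since $\lambda_1(x)$ and $\lambda_2(x)$ are the roots of the auxiliary equation $\lambda^2-\lambda+1/x^2=0$ and $1/x^2$ is its constant term. Abbreviating $\lambda_1=\lambda_1(x)$ and $\lambda_2=\lambda_2(x)$, I would first record the expansion of a generic product obtained from (\ref{b1}):
$$O_p(x)O_q(x)=\frac{1}{x^2-4}\left(\lambda_1^{p+q}-\lambda_1^p\lambda_2^q-\lambda_2^p\lambda_1^q+\lambda_2^{p+q}\right).$$

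First I would apply this to both $O_a(x)O_b(x)$ and $O_c(x)O_d(x)$. Because $a+b=c+d$, the ``diagonal'' contributions $\lambda_1^{a+b}+\lambda_2^{a+b}$ and $\lambda_1^{c+d}+\lambda_2^{c+d}$ coincide and cancel in the difference, leaving only the four cross terms:
$$O_a(x)O_b(x)-O_c(x)O_d(x)=\frac{1}{x^2-4}\left(-\lambda_1^a\lambda_2^b-\lambda_2^a\lambda_1^b+\lambda_1^c\lambda_2^d+\lambda_2^c\lambda_1^d\right).$$
The key step is then to run the identical computation for the shifted indices. Since $(a-t)+(b-t)=(c-t)+(d-t)$ as well, the same diagonal cancellation yields
$$O_{a-t}(x)O_{b-t}(x)-O_{c-t}(x)O_{d-t}(x)=\frac{1}{x^2-4}\left(-\lambda_1^{a-t}\lambda_2^{b-t}-\lambda_2^{a-t}\lambda_1^{b-t}+\lambda_1^{c-t}\lambda_2^{d-t}+\lambda_2^{c-t}\lambda_1^{d-t}\right).$$
Now every cross term factors as $\lambda_1^{p-t}\lambda_2^{q-t}=\lambda_1^p\lambda_2^q(\lambda_1\lambda_2)^{-t}=x^{2t}\lambda_1^p\lambda_2^q$, using $\lambda_1\lambda_2=1/x^2$. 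Hence each term on the right is exactly $x^{2t}$ times the corresponding term in the bracket for $O_a(x)O_b(x)-O_c(x)O_d(x)$, and the common prefactor $1/(x^2-4)$ is untouched. This gives $O_{a-t}(x)O_{b-t}(x)-O_{c-t}(x)O_{d-t}(x)=x^{2t}\bigl(O_a(x)O_b(x)-O_c(x)O_d(x)\bigr)$, which is equivalent to (\ref{gb1}) after dividing by $x^{2t}$.

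I expect no genuine obstacle beyond careful bookkeeping of the four cross terms, together with one small technical remark. The Binet formula (\ref{b1}) was derived under the hypothesis $x^2-4>0$, whereas (\ref{gb1}) is asserted for arbitrary nonzero $x$ and arbitrary integer indices. This is harmless: the negative-index products are well defined because $\lambda_1,\lambda_2\neq0$, and both sides of (\ref{gb1}) are rational functions of $x$. An identity that holds on the infinite set $\{x:x>2\}$ must therefore hold as an identity of rational functions, hence for every nonzero $x$. The Binet-based route is decidedly cleaner here than attempting to iterate the recurrence (\ref{d1}) or manipulate the matrix identities directly, since the product $\lambda_1\lambda_2=1/x^2$ is precisely what converts an index shift by $t$ into the scalar factor $x^{2t}$.
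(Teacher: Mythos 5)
Your proof is correct and follows essentially the same route as the paper: both rest on the Binet formula (\ref{b1}) and the single fact $\lambda_{1}(x)\lambda_{2}(x)=1/x^{2}$, which turns an index shift by $t$ into the factor $x^{2t}$. The paper merely packages the computation as an intermediate identity $O_{n+r}(x)O_{n+s}(x)-O_{n}(x)O_{n+r+s}(x)=\frac{1}{x^{2n}}O_{r}(x)O_{s}(x)$ applied twice (with $n=c$ and $n=c-t$), whereas you expand the four cross terms directly; your closing remark on extending from $x>2$ to all nonzero $x$ via rational-function identity is a detail the paper leaves implicit.
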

\begin{proof}
By the Binet formula (\ref{b1}), for all integers $n$, $r$ and $s$, the following identity is easily verified: $$O_{n+r}(x)O_{n+s}(x)-O_{n}(x)O_{n+r+s}(x)=\frac{1}{x^{2n}}O_{r}(x)O_{s}(x).$$ Setting $n=c$, $r=a-c$ and $s=b-c$ in this identity, we find $$O_{a}(x)O_{b}(x)-O_{c}(x)O_{a+b-c}(x)=\frac{1}{x^{2c}}O_{a-c}(x)O_{b-c}(x).$$ Moreover, setting $n=c-t$, $r=a-c$ and $s=b-c$ in the same identity, we obtain $$O_{a-t}(x)O_{b-t}(x)-O_{c-t}(x)O_{a+b-c-t}(x)=\frac{1}{x^{2(c-t)}}O_{a-c}(x)O_{b-c}(x).$$ Comparing these two identities and using $d=a+b-c$, we complete the proof of the formula.
\end{proof}
Setting $a=n+1$, $b=m$, $c=n$, $d=m+1$ and $t=n-1$ in the general bilinear index-reduction formula,
we arrive at the following result
\begin{corollary}
For the integers $n$ and $m$ such that $m\geq n$, we obtain
\begin{equation}
O_{n+1}(x)O_{m}(x)-O_{n}(x)O_{m+1}(x)=\frac{1}{x^{2n+1}}O_{m-n}(x).
\end{equation}
\end{corollary}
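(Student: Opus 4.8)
The plan is to obtain the identity by direct substitution into the general bilinear index-reduction formula (\ref{gb1}), exactly as the text suggests, and then to collapse the resulting right-hand side using the initial values and the defining recurrence. First I would set $a=n+1$, $b=m$, $c=n$, $d=m+1$ and $t=n-1$. The compatibility hypothesis of the proposition requires $a+b=c+d$, and indeed $a+b=(n+1)+m=n+(m+1)=c+d$, so the formula applies. With these choices the left-hand side of (\ref{gb1}) is precisely $O_{n+1}(x)O_{m}(x)-O_{n}(x)O_{m+1}(x)$, the quantity we wish to evaluate.

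Next I would compute the shifted indices. Since $t=n-1$, we have $a-t=2$, $b-t=m-n+1$, $c-t=1$ and $d-t=m-n+2$, all nonnegative because $m\geq n$. Thus the right-hand side of (\ref{gb1}) becomes
\begin{equation*}
\frac{1}{x^{2(n-1)}}\left(O_{2}(x)O_{m-n+1}(x)-O_{1}(x)O_{m-n+2}(x)\right).
\end{equation*}
Here I would invoke the initial values $O_{1}(x)=O_{2}(x)=\frac{1}{x}$ to factor $\frac{1}{x}$ out of the parenthesis, reducing the expression to
\begin{equation*}
\frac{1}{x^{2n-1}}\left(O_{m-n+1}(x)-O_{m-n+2}(x)\right).
\end{equation*}

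The remaining step is to simplify the difference $O_{m-n+1}(x)-O_{m-n+2}(x)$. Applying the defining recurrence (\ref{d1}) in the form $O_{m-n+2}(x)=O_{m-n+1}(x)-\frac{1}{x^{2}}O_{m-n}(x)$ cancels the $O_{m-n+1}(x)$ terms and leaves $\frac{1}{x^{2}}O_{m-n}(x)$; combining this with the prefactor $\frac{1}{x^{2n-1}}$ yields $\frac{1}{x^{2n+1}}O_{m-n}(x)$, which is the claimed right-hand side. I do not anticipate a genuine obstacle here, since the argument is a guided specialization of a formula already proved; the only point demanding care is the bookkeeping of the shifted indices, ensuring that $c-t=1$ and $a-t=2$ so that the initial-value reductions $O_{1}(x)=O_{2}(x)=\frac{1}{x}$ are legitimately available, together with the boundary case $m=n$, where both sides vanish since $O_{0}(x)=0$ and the two products on the left coincide.
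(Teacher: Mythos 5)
Your proposal is correct and follows exactly the route the paper intends: it specializes the general bilinear index-reduction formula with $a=n+1$, $b=m$, $c=n$, $d=m+1$, $t=n-1$ and then simplifies using $O_{1}(x)=O_{2}(x)=\tfrac{1}{x}$ and the recurrence. The paper merely states this substitution without carrying out the simplification, so your write-up supplies the (correct) details, including the harmless boundary case $m=n$.
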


At the end of this section, we deduce a product formula for Oresme polynomials.
\begin{proposition}\label{pp}
If $O_{n}(x)$ is the $n$-th Oresme polynomial, then
\begin{equation}
O_{n}(x)=\frac{1}{x^{n}}\prod_{1\leq k\leq n-1} \left(x-2\cos\left(\frac{k\pi}{n}\right)\right)
\end{equation}
\end{proposition}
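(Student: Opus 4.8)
The plan is to reduce the statement to the classical factorization of the Chebyshev polynomials of the second kind. Starting from the Binet formula \eqref{b1}, I would strip the powers of $x$ out of the roots by writing $\lambda_{j}(x)=\alpha_{j}/x$, where $\alpha_{1}=(x+\sqrt{x^{2}-4})/2$ and $\alpha_{2}=(x-\sqrt{x^{2}-4})/2$. These satisfy $\alpha_{1}+\alpha_{2}=x$, $\alpha_{1}\alpha_{2}=1$ and $\alpha_{1}-\alpha_{2}=\sqrt{x^{2}-4}$, so that \eqref{b1} rearranges to
\[
x^{n}O_{n}(x)=\frac{\alpha_{1}^{n}-\alpha_{2}^{n}}{\alpha_{1}-\alpha_{2}}.
\]
The point of this rewriting is that the right-hand side is precisely the Chebyshev polynomial of the second kind evaluated at $x/2$: setting $x=2\cos\theta$ gives $\alpha_{1}=e^{i\theta}$, $\alpha_{2}=e^{-i\theta}$ and $(\alpha_{1}^{n}-\alpha_{2}^{n})/(\alpha_{1}-\alpha_{2})=\sin(n\theta)/\sin\theta=U_{n-1}(\cos\theta)$. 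Hence $x^{n}O_{n}(x)=U_{n-1}(x/2)$, a monic polynomial of degree $n-1$ in $x$ (the Chebyshev leading coefficient $2^{n-1}$ is exactly cancelled by the scaling $x/2$).

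Next I would invoke the well-known product form $U_{n-1}(y)=2^{n-1}\prod_{k=1}^{n-1}\bigl(y-\cos(k\pi/n)\bigr)$, whose roots $\cos(k\pi/n)$, $k=1,\dots,n-1$, are the zeros of $\sin(n\theta)$ in $(0,\pi)$. Substituting $y=x/2$ and rescaling each factor via $x/2-\cos(k\pi/n)=\tfrac{1}{2}\bigl(x-2\cos(k\pi/n)\bigr)$, the $n-1$ factors of $\tfrac{1}{2}$ collect into $2^{-(n-1)}$ and annihilate the $2^{n-1}$, leaving
\[
x^{n}O_{n}(x)=U_{n-1}(x/2)=\prod_{k=1}^{n-1}\bigl(x-2\cos(k\pi/n)\bigr),
\]
which is the claim after dividing by $x^{n}$.

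The one genuine gap to close is the domain issue: the Binet formula and the trigonometric substitution are valid under the hypothesis $x^{2}-4>0$, whereas the asserted identity should hold for every nonzero $x$. I would remove this by observing that $x^{n}O_{n}(x)$ is a genuine polynomial in $x$ — an immediate induction on the recurrence \eqref{d1} shows $\deg\bigl(x^{n}O_{n}(x)\bigr)=n-1$ — so the two polynomials above agree on the ray $x>2$ and therefore coincide identically. The main obstacle is thus not the algebra but the bookkeeping around the Chebyshev factorization (degree, leading coefficient, and the exact index set of the roots). If one prefers to avoid citing Chebyshev theory, the same count can be done directly: the monic degree-$(n-1)$ polynomial $x^{n}O_{n}(x)$ vanishes exactly when $\lambda_{1}^{n}=\lambda_{2}^{n}$ with $\lambda_{1}\neq\lambda_{2}$, i.e. when $(\lambda_{1}/\lambda_{2})^{n}=1$ nontrivially; combining this with $\lambda_{1}+\lambda_{2}=1$ and $\lambda_{1}\lambda_{2}=1/x^{2}$ yields $x^{2}=2+2\cos(2\pi k/n)=4\cos^{2}(k\pi/n)$, and as $k$ runs over $1,\dots,n-1$ the pairing $k\leftrightarrow n-k$ reproduces each of the $n-1$ distinct values $2\cos(k\pi/n)$. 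A root count then forces the monic polynomial to equal the stated product.
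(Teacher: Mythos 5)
Your proposal is correct and follows essentially the same route as the paper: both identify $x^{n}O_{n}(x)$ as a monic polynomial of degree $n-1$ whose value is $\sin(n\theta)/\sin\theta$ under $x=2\cos\theta$ (the paper uses the equivalent hyperbolic form $\sinh(nz)/\sinh z$ with $x/2=\cosh z$), locate its $n-1$ roots at $2\cos(k\pi/n)$, and conclude by a degree count; your Chebyshev citation is just a packaging of that computation. Your version is in fact more careful on two points the paper glosses over --- the leading coefficient being $1$, and the extension from the ray $x>2$ (where the Binet formula was established) to all $x\neq 0$, which matters since the roots $2\cos(k\pi/n)$ all lie in $(-2,2)$.
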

\begin{proof}
Clearly, the degree of the Oresme polynomial $x^{n}O_{n}(x)$ is $n-1$ for $n\geq 1$. To deduce a formula for the zeros of $x^{n}O_{n}(x)$, we express $O_{n}(x)$ in terms of hyperbolic functions by using the Binet formula. Setting $x/2= \cosh z$, we find 
\begin{equation}\label{g0}
\begin{aligned}
O_{n}(x)&=\frac{1}{\sqrt{x^{2}-4}}\left(\lambda_{1}^{n}(x)-\lambda_{2}^{n}(x)\right)\\
&=\frac{1}{x^{n}}\left(\frac{\sigma_{1}^{n}(x)-\sigma_{2}^{n}(x)}{\sigma_{1}(x)-\sigma_{2}(x)}\right)\ \ \left(\sigma_{1,2}(x)=\frac{x\pm \sqrt{x^{2}-4}}{2}\right)\\
&=\frac{1}{x^{n}}\left(\frac{e^{nz}-e^{-nz}}{e^{z}-e^{-z}}\right)\\
&=\frac{1}{x^{n}}\cdot \frac{\sinh nz}{\sinh z}.
\end{aligned}
\end{equation}
Let $z=u+iv$, where the imaginary number $i=\sqrt{-1}$. Since $\sinh z \neq 0$, $O_{n}(x)$ is equal to zero only if $\sinh nz= 0$, i.e., $e^{2nz}=1$. We equate the real part to zero, i.e., $u=0$. Therefore, $\sinh inv= i \sin nv = 0$. Hence, $v=k\pi /n$ for any integer $k$ and, therefore, $z=ki\pi /n$. Consequently, for any integer $k$, we get $$\frac{x}{2}=\cosh \left(\frac{ki\pi}{n}\right)=\cos \left(\frac{k\pi}{n}\right).$$ As a result, the proposition follows for $1\leq k\leq n-1$.
\end{proof}

\begin{remark}
It is also observed that, for $O_{n}(x)$, the product formula mentioned in Proposition \ref{pp} gives an alternating expression for $O_{n}(x)$. Thus, e.g., $$O_{4}(x)=\frac{1}{x^{4}}\prod_{1\leq k\leq 3} \left(x-2\cos\left(\frac{k\pi}{4}\right)\right)=\frac{1}{x^{4}}(x^{3}-2x).$$
\end{remark}

\section{Derivative Sequences of Oresme Polynomials}\label{sec:2}
In this section, we study the sequences obtained by differentiating the Oresme polynomials. Several properties of these sequences and some relations between the Oresme polynomials and their derivatives are also presented.

Identity (\ref{b1}) can be rewritten as follows:
\begin{equation}\label{bn}
O_{n}(x)=\sum_{j=0}^{\lfloor \frac{n-1}{2}\rfloor}(-1)^{j}\binom{n-j-1}{j}x^{-2j}.
\end{equation}
As a result of differentiation of (\ref{bn}) with respect to $x$, we obtain
\begin{equation}\label{bn1}
O_{n}'(x)=\sum_{j=0}^{\lfloor \frac{n-2}{2}\rfloor}(-1)^{j+1}(2j)\binom{n-j-1}{j}x^{-2j-1},\ n\geq 2,
\end{equation}
with $O_{0}'(x)=0$ and $O_{1}'(x)=-1/x^{2}$. Further, several first derivatives of the Oresme polynomials with respect to $x$ have the form
\begin{align*}
O_{0}'(x)&=0,\\
O_{1}'(x)&=-\frac{1}{x^{2}},\\
O_{2}'(x)&=-\frac{1}{x^{2}},\\
O_{3}'(x)&=-\frac{1}{x^{4}}(x^{2}-3),\\
O_{4}'(x)&=-\frac{1}{x^{4}}(x^{2}-6),\\
O_{5}'(x)&=-\frac{1}{x^{6}}(x^{4}-9x^{2}+5),\\
O_{6}'(x)&=-\frac{1}{x^{6}}(x^{4}-12x^{2}+15).
\end{align*}
Different types of numerical sequences can be generated by substituting integers instead of the variable $x$ of balancing polynomials, such as
\begin{align*}
O_{n}'(3)&=\{0,-\frac{1}{9},-\frac{1}{9},-\frac{6}{81},-\frac{3}{81},\cdots\},\\
O_{n}'(4)&=\{0,-\frac{1}{16},-\frac{1}{16},-\frac{13}{256},-\frac{10}{256},\cdots\}.
\end{align*}

The following results give some interesting relationships between the Oresme polynomials and their first derivatives:

\begin{proposition}
Let $O_{n}'(x)$ denote the first derivative of $O_{n}(x)$. Then, for $n\geq 2$
\begin{equation}\label{g1}
O_{n}'(x)=\frac{x^{2}(2n-x^{2})O_{n}(x)-2nO_{n-2}(x)}{x^{3}(x^{2}-4)},\ x\neq 0,\pm2.
\end{equation}
\end{proposition}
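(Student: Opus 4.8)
The plan is to differentiate the Binet formula (\ref{b1}) directly. Writing $D=\sqrt{x^{2}-4}$, the roots are $\lambda_{1}=(x+D)/(2x)$ and $\lambda_{2}=(x-D)/(2x)$, so that $O_{n}(x)=(\lambda_{1}^{n}-\lambda_{2}^{n})/D$. First I would record the elementary derivatives $D'=x/D$ and, after a one-line computation using $x^{2}-D^{2}=4$,
\[
\lambda_{1}'(x)=\frac{2}{x^{2}D},\qquad \lambda_{2}'(x)=-\frac{2}{x^{2}D}.
\]
The useful feature here is $\lambda_{1}'=-\lambda_{2}'$: when I differentiate $\lambda_{1}^{n}-\lambda_{2}^{n}$ the two contributions combine into a \emph{sum} of $(n-1)$st powers rather than a difference.

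Carrying out the quotient rule on $O_{n}=(\lambda_{1}^{n}-\lambda_{2}^{n})/D$ and using $\lambda_{1}^{n}-\lambda_{2}^{n}=D\,O_{n}$ in the term coming from $D'$, I obtain
\[
O_{n}'(x)=-\frac{x}{D^{2}}\,O_{n}(x)+\frac{2n}{x^{2}D^{2}}\bigl(\lambda_{1}^{n-1}+\lambda_{2}^{n-1}\bigr),
\]
and since $D^{2}=x^{2}-4$ everything is now expressed through $O_{n}(x)$ and the companion power sum $V_{n-1}:=\lambda_{1}^{n-1}+\lambda_{2}^{n-1}$. The key step, and the one I expect to be the main obstacle, is to rewrite $V_{n-1}$ back in terms of the Oresme polynomials. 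From the roots one reads off $\lambda_{1}+\lambda_{2}=1$ and $\lambda_{1}\lambda_{2}=1/x^{2}$, together with the point that is easy to get wrong, namely $\lambda_{1}-\lambda_{2}=D/x$ (not $D$). Consequently, setting $U_{n}:=(\lambda_{1}^{n}-\lambda_{2}^{n})/(\lambda_{1}-\lambda_{2})=x\,O_{n}(x)$, the standard identity $V_{m}=U_{m+1}-(\lambda_{1}\lambda_{2})\,U_{m-1}$ gives
\[
\lambda_{1}^{n-1}+\lambda_{2}^{n-1}=x\,O_{n}(x)-\frac{1}{x}\,O_{n-2}(x).
\]
(Equivalently, this can be checked head-on by invoking $\lambda_{1,2}^{2}=\lambda_{1,2}-1/x^{2}$ to collapse $x\lambda_{1,2}^{n}-\tfrac{1}{x}\lambda_{1,2}^{n-2}$ onto $D\,\lambda_{1,2}^{n-1}$, which reduces to the identity $x(x^{2}-D^{2})=4x$.)

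Finally I would substitute the companion-sum identity into the formula for $O_{n}'$ and clear denominators over $x^{3}(x^{2}-4)$. The two $O_{n}$ contributions, with coefficients $-x/(x^{2}-4)$ and $2n/\bigl(x(x^{2}-4)\bigr)$, merge into the single coefficient $x^{2}(2n-x^{2})/\bigl(x^{3}(x^{2}-4)\bigr)$, while the $O_{n-2}$ term becomes $-2n/\bigl(x^{3}(x^{2}-4)\bigr)$; this is exactly (\ref{g1}). Once the normalization $\lambda_{1}-\lambda_{2}=D/x$ is correctly in place the remaining manipulation is routine, and the excluded values $x\neq 0,\pm 2$ are precisely those at which $x$ or $D$ would vanish in the denominators.
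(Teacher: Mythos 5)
Your proposal is correct and follows essentially the same route as the paper: differentiate the Binet formula using $\lambda_{1}'=-\lambda_{2}'=2/(x^{2}\sqrt{x^{2}-4})$, then convert the resulting companion sum $\lambda_{1}^{n-1}+\lambda_{2}^{n-1}$ back to $xO_{n}(x)-\tfrac{1}{x}O_{n-2}(x)$ (the paper does this by multiplying through by $\lambda_{1}-\lambda_{2}$, which is the same algebra as your $V_{m}=U_{m+1}-\lambda_{1}\lambda_{2}U_{m-1}$ identity). All intermediate coefficients check out, including the normalization $\lambda_{1}-\lambda_{2}=\sqrt{x^{2}-4}/x$ that you correctly flag as the easy place to slip.
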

\begin{proof}
Recall that $\lambda_{1}(x)=(x+\sqrt{x^{2}-4})/2x$ and $\lambda_{2}(x)=(x-\sqrt{x^{2}-4})/2x$ with $\lambda_{1}(x)\lambda_{2}(x)=1/x^{2}$. Their first derivatives are, respectively, $$\lambda_{1}'(x)=\frac{2}{x^{2}\Delta}\ \textrm{and}\ \lambda_{2}'(x)=-\frac{2}{x^{2}\Delta},$$ where $\Delta=\sqrt{x^{2}-4}$ and $\Delta' =x/\Delta$. Therefore, by using the Binet formula (\ref{g0}), we find
\begin{align*}
O_{n}'(x)&=\frac{d}{dx}\left(\frac{\lambda_{1}^{n}(x)-\lambda_{2}^{n}(x)}{\Delta}\right)\\
&=\frac{\left[(\lambda_{1}^{n}(x))'-(\lambda_{2}^{n}(x))'\right]\Delta-(\lambda_{1}^{n}(x)-\lambda_{2}^{n}(x))\Delta'}{\Delta^{2}}\\
&=\frac{\frac{2n}{x\Delta}\left[\lambda_{1}^{n-1}(x)+\lambda_{2}^{n-1}(x)\right](\lambda_{1}(x)-\lambda_{2}(x))-xO_{n}(x)}{\Delta^{2}}\\
&=\frac{\frac{2n}{x\Delta}\left[\lambda_{1}^{n}(x)-\lambda_{2}^{n}(x)\right]-\frac{2n}{x^{3}\Delta}\left[\lambda_{1}^{n-2}(x)-\lambda_{2}^{n-2}(x)\right]-xO_{n}(x)}{\Delta^{2}}\\
&=\frac{x^{2}(2n-x^{2})O_{n}(x)-2nO_{n-2}(x)}{x^{3}(x^{2}-4)},
\end{align*}
which completes the proof.
\end{proof}

In particular, for $x=3$, identity (\ref{g1}) reduces to $$O_{n}'(3)=\frac{1}{135}\left(9(2n-9)\frac{F_{2n}}{3^{n}}-2n\frac{F_{2(n-2)}}{3^{n-2}}\right).$$

\begin{proposition}[Convolved Oresme polynomials]
If $O_{0}'(x)=0$ and $O_{1}'(x)=-1/x^{2}$, we obtain
\begin{equation}\label{n2}
O_{n}'(x)+\frac{n}{x}O_{n}(x)=\sum_{j=1}^{n-1}O_{j}(x)O_{n-j}(x),\ n\geq 2.
\end{equation}
\end{proposition}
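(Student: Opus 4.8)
The plan is to pass to the ordinary generating function of the Oresme polynomials and recast the claimed identity as a single first-order partial differential equation. Since $O_0(x)=0$, $O_1(x)=1/x$, and the recurrence $O_{n+1}(x)=O_n(x)-\tfrac{1}{x^2}O_{n-1}(x)$ actually holds for all $n\geq 1$ (the case $n=1$ giving $O_2=1/x$), a routine manipulation of the recurrence yields
\[
G(t,x):=\sum_{n\geq 0}O_n(x)\,t^n=\frac{t/x}{1-t+t^2/x^2}.
\]
The right-hand side of (\ref{n2}) is exactly a Cauchy product coefficient: because $O_0(x)=0$, the $j=0$ and $j=n$ terms vanish, so $\sum_{j=1}^{n-1}O_j(x)O_{n-j}(x)=\sum_{j=0}^{n}O_j(x)O_{n-j}(x)=[t^n]\,G(t,x)^2$. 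On the left, $\sum_{n\geq 0}O_n'(x)\,t^n=\partial_x G$ and $\sum_{n\geq 0}\tfrac{n}{x}O_n(x)\,t^n=\tfrac{t}{x}\,\partial_t G$. Hence proving (\ref{n2}) for all $n$ is equivalent to establishing the single identity
\[
\partial_x G+\frac{t}{x}\,\partial_t G=G^2.
\]

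To verify this I would write $G=t\,x^{-1}D^{-1}$ with $D=1-t+t^2x^{-2}$, so that $\partial_x D=-2t^2x^{-3}$ and $\partial_t D=-1+2tx^{-2}$. Differentiating directly gives
\[
\partial_x G=-t x^{-2}D^{-1}+2t^3x^{-4}D^{-2},\qquad \frac{t}{x}\,\partial_t G=t x^{-2}D^{-1}+t^2x^{-2}D^{-2}-2t^3x^{-4}D^{-2}.
\]
Adding these, the two $t x^{-2}D^{-1}$ terms cancel, the two $2t^3x^{-4}D^{-2}$ terms cancel, and what survives is $t^2x^{-2}D^{-2}=G^2$, as required. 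Extracting the coefficient of $t^n$ for $n\geq 2$ then yields (\ref{n2}); the degenerate cases $n=0,1$ are consistent and checked directly.

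The only genuine obstacle is the reformulation step itself: one must recognize that the mixed combination $O_n'+\tfrac{n}{x}O_n$ is precisely the coefficient operator $\partial_x+\tfrac{t}{x}\partial_t$ applied to $G$, after which everything collapses to an elementary rational-function computation. As an alternative staying closer to the paper's methods, one can argue directly from the Binet formula (\ref{b1}): expanding $O_jO_{n-j}$, the diagonal terms contribute $(n-1)(\lambda_1^n+\lambda_2^n)$, while the cross terms $\sum_{j=1}^{n-1}\lambda_1^j\lambda_2^{n-j}$ form a geometric series that sums to $O_{n-1}(x)/x$ using $\lambda_1\lambda_2=1/x^2$ and $\lambda_1-\lambda_2=\sqrt{x^2-4}/x$. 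Writing $\lambda_1^n+\lambda_2^n=x\bigl(2O_{n+1}(x)-O_n(x)\bigr)$ and computing $O_n'$ as in the preceding proposition, both sides reduce to $(n-1)x\,O_n(x)-\tfrac{2n}{x}O_{n-1}(x)$ after a single application of the recurrence; in this route the main bookkeeping hurdle is evaluating the cross-term geometric sum correctly.
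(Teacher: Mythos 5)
Your generating-function argument is correct and is a genuinely different route from the paper's. The paper proves (\ref{n2}) by strong induction on $n$: it differentiates the recurrence to get $O_{n+1}'(x)=O_{n}'(x)+\tfrac{2}{x^{3}}O_{n-1}(x)-\tfrac{1}{x^{2}}O_{n-1}'(x)$, applies the inductive hypothesis at indices $n$ and $n-1$, and re-indexes the two convolution sums so that the recurrence reassembles them into $\sum_{j=1}^{n}O_{j}(x)O_{n+1-j}(x)$. You instead encode the whole family of identities in the single rational-function equation $\partial_{x}G+\tfrac{t}{x}\partial_{t}G=G^{2}$ with $G=\tfrac{t/x}{1-t+t^{2}/x^{2}}$; I checked the differentiation and the cancellations, and the computation is right, as are the preliminary observations that the recurrence extends to $n=1$ (so the closed form for $G$ is valid) and that $O_{0}=0$ lets you identify the truncated sum with the full Cauchy product. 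Your approach buys a uniform, induction-free verification that also explains the structure of the identity (the operator $\partial_{x}+\tfrac{t}{x}\partial_{t}$ is exactly what produces $O_{n}'+\tfrac{n}{x}O_{n}$), and it handles the degenerate cases $n=0,1$ for free; the paper's induction is more elementary and stays entirely inside the recurrence framework, at the cost of some re-indexing bookkeeping and a two-step inductive hypothesis. One small caution on your sketched Binet-formula alternative: the common form both sides should reduce to is $\frac{1}{x^{2}-4}\left((n-1)xO_{n}(x)-\tfrac{2n}{x}O_{n-1}(x)\right)$ rather than the expression you wrote (check $n=2$: the right-hand side of (\ref{n2}) is $1/x^{2}$, not $(x^{2}-4)/x^{2}$), and that route also needs the standard remark that an identity of rational functions verified for $x^{2}>4$ persists for all $x\neq 0$; neither point affects your main proof.
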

\begin{proof}
The proof is obtained by induction on $n$. Clearly, the result holds for $n=2$, since $$O_{2}'(x)+\frac{2}{x}O_{2}(x)=\frac{1}{x^{2}}=O_{1}(x)O_{1}(x).$$ As the inductive hypothesis, we assume that the formula is true for every $m\leq n$. To perform the inductive step, we differentiate the recurrence relation for Oresme polynomials with respect to $x$, $$O_{n+1}'(x)=O_{n}'(x)+\frac{2}{x^{3}}O_{n-1}(x)-\frac{1}{x^{2}}O_{n-1}'(x)$$ and apply the inductive hypothesis to get
\begin{align*}
O_{n+1}'(x)&+\frac{(n+1)}{x}O_{n+1}(x)\\
&=O_{n}'(x)+\frac{2}{x^{3}}O_{n-1}(x)-\frac{1}{x^{2}}O_{n-1}'(x)\\
&\ \ +\frac{(n+1)}{x}O_{n}(x)-\frac{(n+1)}{x^{3}}O_{n-1}(x)\\
&=\frac{1}{x}O_{n}(x)+\left[O_{n}'(x)+\frac{n}{x}O_{n}(x)\right]\\
&\ \ -\frac{1}{x^{2}}\left[O_{n-1}'(x)+\frac{(n-1)}{x}O_{n-1}(x)\right]\\
&=\frac{1}{x}O_{n}(x)+\sum_{j=1}^{n-1}O_{j}(x)O_{n-j}(x)-\frac{1}{x^{2}}\sum_{j=1}^{n-2}O_{j}(x)O_{n-1-j}(x)\\
&=\frac{1}{x}O_{n}(x)+\frac{1}{x}O_{n-1}(x)+\sum_{j=1}^{n-2}O_{j}(x)O_{n+1-j}(x)\\
&=\sum_{j=1}^{n}O_{j}(x)O_{n+1-j}(x).
\end{align*}
This completes the proof.
\end{proof}

\begin{remark}
It is observed that identities (\ref{g1}) and (\ref{n2}) together yield the following formula: $$\frac{x^{2}((n-1)x^{2}-2n)O_{n}(x)-2nO_{n-2}(x)}{x^{3}(x^{2}-4)}=\sum_{j=1}^{n-1}O_{j}(x)O_{n-j}(x),$$ for $n\geq 2$ and $x\neq 0,\pm2$.
\end{remark}

\begin{proposition}
For $n\geq 2$, we have
\begin{equation}\label{g2}
O_{n}'(x)+\frac{n}{x}O_{n}(x)=\sum_{j=0}^{\lfloor \frac{n-2}{2}\rfloor}\frac{(n-1-2j)}{x^{2j+1}}O_{n-1-2j}(x).
\end{equation}
\end{proposition}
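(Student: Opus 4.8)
The plan is to prove (\ref{g2}) by expanding both sides as finite Laurent polynomials in $1/x$ and comparing coefficients; the conceptual crux will be a single binomial identity that fortunately telescopes, while the practical difficulty is the index bookkeeping.

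First I would record the finite expansion coming from the Binet formula (\ref{b1}), namely
$$O_{n}(x)=\sum_{j\geq 0}(-1)^{j}\binom{n-1-j}{j}x^{-2j-1}.$$
Differentiating term by term and adding $\frac{n}{x}O_{n}(x)$, the monomial $x^{-2j-1}$ contributes the factor $-(2j+1)$ from the derivative and $+n$ from multiplication by $n/x$, so the left-hand side of (\ref{g2}) takes the closed form
$$O_{n}'(x)+\frac{n}{x}O_{n}(x)=\sum_{j\geq 0}(-1)^{j}(n-1-2j)\binom{n-1-j}{j}x^{-2j-2}.$$
The term with $n-1-2j=0$, occurring for odd $n$, vanishes, so the summation range already matches that of the right-hand side of (\ref{g2}). (One could instead start from the convolution Proposition (\ref{n2}), but then one must pass through a product-to-sum step, so the direct expansion seems cleaner.)

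Next I would expand the right-hand side of (\ref{g2}) by substituting the same series for each factor $O_{n-1-2k}(x)$, producing a double sum. Collecting the coefficient of $x^{-2m-2}$, with $m=k+i$ where $i$ is the summation index of $O_{n-1-2k}$, and setting $p=n-1-2m$, the required equality of coefficients reduces to the purely combinatorial statement
$$\sum_{i=0}^{m}(-1)^{i}(p+2i)\binom{p-1+i}{i}=(-1)^{m}\,p\binom{p+m}{m},$$
since the left-hand coefficient above equals $(-1)^{m}p\binom{p+m}{m}$ (using $n-1-m=p+m$). Here I would verify that the floor bounds on the two summations force the inner index $i$ to run over exactly $0,1,\dots,m$ for every admissible $m$, so that the comparison is valid term by term.

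Finally I would establish the displayed binomial identity by telescoping. Writing $T_{m}=(-1)^{m}p\binom{p+m}{m}$, it suffices to check that $T_{m}-T_{m-1}$ equals the $i=m$ summand $(-1)^{m}(p+2m)\binom{p-1+m}{m}$, since $T_{0}=p$ matches the $i=0$ term. Using Pascal's rule $\binom{p+m}{m}=\binom{p+m-1}{m}+\binom{p+m-1}{m-1}$ together with the absorption relation $p\binom{p+m-1}{m-1}=m\binom{p+m-1}{m}$, one finds $T_{m}-T_{m-1}=(-1)^{m}(p+2m)\binom{p+m-1}{m}$, as needed. I expect the main obstacle to be organizational rather than conceptual: keeping the floor bounds, the vanishing top term for odd $n$, and the reindexing $m=k+i$ mutually consistent so that the coefficient comparison is legitimate. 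Once this bookkeeping is in place, the telescoping identity closes the proof.
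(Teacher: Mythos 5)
Your proof is correct, but it takes a genuinely different route from the paper's. The paper proves (\ref{g2}) by strong induction on $n$: it differentiates the recurrence (\ref{d1}) to obtain $O_{n+1}'(x)=O_{n}'(x)+\frac{2}{x^{3}}O_{n-1}(x)-\frac{1}{x^{2}}O_{n-1}'(x)$, then combines the inductive hypotheses for $n$ and $n-1$, splitting into the even and odd cases to track the floor in the upper summation limit. You instead identify both sides coefficient by coefficient, starting from the explicit expansion $O_{n}(x)=\sum_{j}(-1)^{j}\binom{n-1-j}{j}x^{-2j-1}$ (this is the correct form; as one checks from $O_{1}(x)=1/x$, the paper's displayed (\ref{bn}) is missing the factor $x^{-1}$, so you have silently repaired a typo), and you reduce the whole statement to the identity $\sum_{i=0}^{m}(-1)^{i}(p+2i)\binom{p-1+i}{i}=(-1)^{m}p\binom{p+m}{m}$ with $p=n-1-2m$. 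Your telescoping does close this: Pascal's rule gives $T_{m}-T_{m-1}=(-1)^{m}p\left[\binom{p+m-1}{m}+2\binom{p+m-1}{m-1}\right]$, and absorption $p\binom{p+m-1}{m-1}=m\binom{p+m-1}{m}$ turns this into $(-1)^{m}(p+2m)\binom{p+m-1}{m}$, the $i=m$ summand. The index bookkeeping is also sound: since $m=k+i\leq\lfloor (n-2)/2\rfloor$ forces $p\geq 1$, every binomial coefficient involved is a genuine one and the inner index runs over exactly $0,\dots,m$, while the top term of the left-hand expansion vanishes for odd $n$ exactly as you note. What the paper's induction buys is brevity, given that the differentiated recurrence is reused in the surrounding propositions; what your computation buys is a non-inductive, fully explicit verification and, as a byproduct, a clean closed-form binomial identity. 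Either argument is acceptable.
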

\begin{proof}
We proof this by induction. It is clear that the claim is true for $n=2$. Let us suppose that the claim is true for $n$. If $n=2m$ is an even integer, by induction hypothesis we get $$O_{2m}'(x)+\frac{2m}{x}O_{2m}(x)=\sum_{j=0}^{m-1}\frac{(2m-1-2j)}{x^{2j+1}}O_{2m-1-2j}(x)$$ y $$O_{2m-1}'(x)+\frac{(2m-1)}{x}O_{2m-1}(x)=\sum_{j=0}^{m-1}\frac{(2m-2-2j)}{x^{2j+1}}O_{2m-2-2j}(x).$$ Now, we will show the equation is true for $n+1$ namely $2m+1$. By using (\ref{d1}) we get
\begin{align*}
O_{2m+1}'(x)+\frac{(2m+1)}{x}&O_{2m+1}(x)\\
&=O_{2m}'(x)+\frac{2}{x^{3}}O_{2m-1}(x)-\frac{1}{x^{2}}O_{2m-1}'(x)\\
&\ \ +\frac{(2m+1)}{x}O_{2m}(x)-\frac{(2m+1)}{x^{3}}O_{2m-1}(x)\\
&=\frac{1}{x}O_{2m}(x)+\left[O_{2m}'(x)+\frac{2m}{x}O_{2m}(x)\right]\\
&\ \ -\frac{1}{x^{2}}\left[O_{2m-1}'(x)+\frac{(2m-1)}{x}O_{2m-1}(x)\right]\\
&=\frac{1}{x}O_{2m}(x)+\sum_{j=0}^{m-1}\frac{(2m-1-2j)}{x^{2j+1}}O_{2m-1-2j}(x)\\
&\ \ -\frac{1}{x^{2}}\sum_{j=0}^{m-1}\frac{(2m-2-2j)}{x^{2j+1}}O_{2m-2-2j}(x)\\
&=\sum_{j=0}^{m}\frac{(2m+1-2j)}{x^{2j+1}}O_{2m+1-2j}(x).
\end{align*}
The similar proof can be given for the case when $n$ is an odd integer. Thus, the result follows for all $n\geq 2$.
\end{proof}

\begin{proposition}
For $n\geq 1$,
\begin{equation}\label{g3}
(n-1)O_{n}(x)-2nO_{n+1}(x)=xO_{n+1}'(x)-\frac{1}{x}O_{n-1}'(x).
\end{equation}
\end{proposition}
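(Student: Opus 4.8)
The plan is to prove (\ref{g3}) directly from the closed derivative formula (\ref{g1}), evaluated at the two shifted indices, rather than by induction. The right-hand side $xO_{n+1}'(x)-\frac{1}{x}O_{n-1}'(x)$ involves only the indices $n+1$ and $n-1$, and (\ref{g1}) expresses each derivative as a rational combination of Oresme polynomials carrying \emph{no} derivatives, so a single substitution should turn the whole equation into a polynomial identity among the $O_k(x)$ that can be verified using the recurrence (\ref{d1}).

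Concretely, I would first record the two instances of (\ref{g1}),
\begin{align*}
xO_{n+1}'(x)&=\frac{x^{2}(2(n+1)-x^{2})O_{n+1}(x)-2(n+1)O_{n-1}(x)}{x^{2}(x^{2}-4)},\\
\frac{1}{x}O_{n-1}'(x)&=\frac{x^{2}(2(n-1)-x^{2})O_{n-1}(x)-2(n-1)O_{n-3}(x)}{x^{4}(x^{2}-4)},
\end{align*}
subtract them over the common denominator $x^{4}(x^{2}-4)$, and multiply (\ref{g3}) through by $x^{2}-4$ to clear that factor. Next I would push every remaining index back into the window $\{O_{n+1}(x),O_{n}(x),O_{n-1}(x)\}$ using (\ref{d1}) in the forms $x^{-2}O_{n-1}(x)=O_{n}(x)-O_{n+1}(x)$ and $x^{-2}O_{n-2}(x)=O_{n-1}(x)-O_{n}(x)$ (the term $O_{n-3}$ requires two applications), together with $x^{2}O_{n+1}(x)=x^{2}O_{n}(x)-O_{n-1}(x)$ to absorb the surviving $x^{2}$-weights. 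The target is to reduce $(x^{2}-4)\bigl(xO_{n+1}'-\frac{1}{x}O_{n-1}'\bigr)$ to the same combination of $O_{n+1}$, $O_{n}$, $O_{n-1}$ as $(x^{2}-4)\bigl((n-1)O_{n}-2nO_{n+1}\bigr)$.

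I expect the coefficient of $O_{n}(x)$ to be the crux. After substitution the right-hand side carries $x^{2}$-weighted copies of both $O_{n+1}$ and $O_{n}$, and the pure $O_{n}$ coefficient only emerges once those weights are collapsed through $x^{2}O_{n+1}=x^{2}O_{n}-O_{n-1}$; at that point the contributions coming from the two shifted derivatives largely cancel, and the delicate point is to confirm that what survives in the $O_{n}$ column is exactly $n-1$ (a single mis-tracked factor of two there would change the leading coefficient). I would therefore keep the $O_{n+1}$, $O_{n}$, $O_{n-1}$ columns strictly separate throughout the reduction and verify that coefficient last. As a safeguard---and because (\ref{g1}) is asserted only for $n\ge2$ while (\ref{g3}) is claimed from $n\ge1$---I would also check the smallest indices by hand against the explicit derivative list following (\ref{bn1}), using $O_{0}'(x)=0$ for the boundary term.
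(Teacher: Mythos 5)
Your plan is methodologically sound and genuinely different from the paper's proof (the paper argues by induction on $n$, using the differentiated recurrence), but carried out faithfully it will not close --- and not because of a mis-tracked factor on your side: the proposition as printed is false. Over the common denominator $x^{4}(x^{2}-4)$, the numerator of $xO_{n+1}'(x)-\frac{1}{x}O_{n-1}'(x)$ obtained from the two instances of (\ref{g1}) is
\begin{equation*}
x^{4}(2n+2-x^{2})O_{n+1}(x)-x^{2}(4n-x^{2})O_{n-1}(x)+2(n-1)O_{n-3}(x),
\end{equation*}
and after collapsing $O_{n-1}(x)=x^{2}\left(O_{n}(x)-O_{n+1}(x)\right)$ and $O_{n-3}(x)=x^{4}\left((x^{2}-2)O_{n}(x)-(x^{2}-1)O_{n+1}(x)\right)$, it equals $x^{4}$ times a combination in which the coefficient of $O_{n+1}(x)$ is $-2n(x^{2}-4)$ and the coefficient of $O_{n}(x)$ is $(2n-1)(x^{2}-4)$. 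Hence your computation proves
\begin{equation*}
(2n-1)O_{n}(x)-2nO_{n+1}(x)=xO_{n+1}'(x)-\frac{1}{x}O_{n-1}'(x),
\end{equation*}
with coefficient $2n-1$, not $n-1$. This is precisely the ``delicate point'' you flagged in the $O_{n}$ column, except that the factor-of-two discrepancy lies in the statement itself, not in your reduction.

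Your safeguard of checking small indices settles the matter and would have been decisive. At $n=1$ the claimed left-hand side is $0\cdot O_{1}(x)-2O_{2}(x)=-\frac{2}{x}$, while the right-hand side is $xO_{2}'(x)-\frac{1}{x}O_{0}'(x)=-\frac{1}{x}$; at $n=2$ the left-hand side is $O_{2}(x)-4O_{3}(x)=\frac{4-3x^{2}}{x^{3}}$, while the right-hand side is $xO_{3}'(x)-\frac{1}{x}O_{1}'(x)=\frac{4-x^{2}}{x^{3}}$. The corrected identity with $2n-1$ passes both checks, and all higher ones. For comparison, the paper's own induction is unsound: its base case $n=1$ is dismissed as clear but fails as just computed, and the final equality of its inductive chain would require $O_{n}(x)=2O_{n+1}(x)$, which is false. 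So the proper conclusion of your work is the corrected identity displayed above; no argument can prove the proposition as stated.
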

\begin{proof}
The proof is obtained by induction on $n$. Clearly, the result holds for $n=1$. As the inductive hypothesis, we assume that the formula is true for every $m\leq n$. To perform the inductive step, we differentiate the recurrence relation for Oresme polynomials with respect to $x$, $$O_{n+1}'(x)=O_{n}'(x)+\frac{2}{x^{3}}O_{n-1}(x)-\frac{1}{x^{2}}O_{n-1}'(x)$$ and apply the inductive hypothesis to get
\begin{align*}
xO_{n+2}'(x)&-\frac{1}{x}O_{n}'(x)\\
&=xO_{n+1}'(x)+\frac{2}{x^{2}}O_{n}(x)-\frac{1}{x}O_{n}'(x)\\
&\ \ -\frac{1}{x}O_{n-1}'(x)-\frac{2}{x^{4}}O_{n-2}(x)+\frac{1}{x^{3}}O_{n-2}'(x)\\
&=\left[xO_{n+1}'(x)-\frac{1}{x}O_{n-1}'(x)\right]+\frac{2}{x^{2}}\left[O_{n}(x)-\frac{1}{x^{2}}O_{n-2}(x)\right]\\
&\ \ -\frac{1}{x^{2}}\left[xO_{n}'(x)-\frac{1}{x}O_{n-2}'(x)\right]\\
&=\left[(n-1)O_{n}(x)-2nO_{n+1}(x)\right]+\frac{2}{x^{2}}\left[O_{n}(x)-\frac{1}{x^{2}}O_{n-2}(x)\right]\\
&\ \ -\frac{1}{x^{2}}\left[(n-2)O_{n-1}(x)-2(n-1)O_{n}(x)\right]\\
&=nO_{n+1}(x)-2(n+1)O_{n+2}(x).
\end{align*}
This completes the proof.
\end{proof}

\section{Conclusions}\label{sec:4}
The generalized $k$-Oresme sequences have been considered and studied many of their properties. Further, the Oresme polynomials that are the natural extension of $k$-Oresme numbers are defined and established many of their properties. As matrices can also be used to represent Oresme polynomials, a Oresme polynomial matrix $M_{or}$ of order 2 has formed.

\medskip

\end{document}